\documentclass[11pt]{article} 

\usepackage{amsmath}
\usepackage{amssymb}
\usepackage{color}
\usepackage[T1]{fontenc}
\usepackage[latin1]{inputenc}
\textheight = 22 cm
\textwidth = 15.5 cm
\hoffset = -1.5 cm
\voffset = -1 cm
\parskip = 3mm
\parindent 5mm

\def\r{\rightarrow}

\usepackage{enumerate}

\newcommand{\fdem}{\hspace*{\fill}~$\Box$\par\endtrivlist\unskip}

\renewcommand{\L}{\mathbb{L}}

\newcommand{\N}{\mathbb{N}}     

\newcommand{\R}{\mathbb{R}}     
     
\newcommand{\C}{\mathbb{C}}

\renewcommand{\r}{\mathop{\rightarrow}}

\newcommand{\cB}{\mbox{$\cal B$}}

\newcommand{\cU}{\mbox{$\cal U$}}

\newtheorem{theo}{Theorem}
\newtheorem{pro}{Proposition}
\newenvironment{proof}[1]{\textit{Proof#1.\,}}{\fdem}
\newtheorem{lem}{Lemma}
\newtheorem{rem}{Remark}

\newtheorem{cor}{Corollary}

\newtheorem{aex}{Example}[section]

\title{A computable bound of the essential spectral radius of finite range Metropolis-Hastings kernels}

\author{Loïc HERV\'E, and James LEDOUX \footnote{INSA de Rennes, IRMAR, F-35042, France; CNRS, UMR 6625, Rennes, F-35708, France; Université Européenne de Bretagne, France. \{Loic.Herve,James.Ledoux\}@insa-rennes.fr}
}

\begin{document}

\maketitle
\begin{abstract}
Let $\pi$ be a positive continuous target density on $\R$. Let $P$ be  the Metropolis-Hastings operator on the Lebesgue space $\L^2(\pi)$ corresponding to a proposal Markov kernel $Q$ on $\R$. When using the quasi-compactness method to estimate the spectral gap of $P$, a mandatory first step is to obtain an accurate bound of the essential spectral radius  $r_{ess}(P)$ of $P$. In this paper a computable bound of $r_{ess}(P)$ is obtained  under the following assumption on the proposal kernel: $Q$ has a bounded continuous density $q(x,y)$ on $\R^2$ satisfying the following finite range assumption : $|u| > s \, \Rightarrow\,  q(x,x+u) = 0$ (for some $s>0$). This result is illustrated with Random Walk Metropolis-Hastings kernels. 
\end{abstract}
\begin{center}
AMS subject classification : 60J10, 47B07

Keywords : Markov chain operator, Metropolis-Hastings algorithms, Spectral gap
\end{center}

%===========================================
\section{Introduction}
%========================================
Let $\pi$ be a positive distribution density on $\R$. Let $Q(x,dy) = q(x,y)dy$ be a Markov kernel on $\R$. Throughout the paper we assume that $q(x,y)$ satisfies the following finite range assumption: there exists $s>0$ such that   
\begin{equation} \label{Ass-sup-fini}
|u| > s \ \Longrightarrow\  q(x,x+u) = 0.
\end{equation}
Let $T(x,dy) = t(x,y)dy$ be the nonnegative kernel on $\R$ given by 
\begin{equation} \label{def-t}
t(x,y) := 
     \min\left(q(x,y)\, ,\, \frac{\pi(y)\, q(y,x)}{\pi(x)}\right)
\end{equation}
and define the associated Metropolis-Hastings kernel:  
\begin{equation} \label{def-P-noyau}
P(x,dy) := r(x)\, \delta_x(dy) + T(x,dy) \qquad \text{with }\ r(x) := 1 - \int_\R t(x,y)\, dy,
\end{equation}
where $\delta_x(dy)$ denotes the Dirac distribution at $x$. The associated Markov operator is still denoted by $P$, that is we set for every bounded measurable function $f : \R\r\C$ : 
\begin{equation} \label{def-P}
\forall x\in\R,\quad (Pf)(x) = r(x) f(x) + \int_\R f(y)\, t(x,y)\, dy.
\end{equation}
In the context of Monte Carlo Markov Chain methods, the kernel $Q$ is called the proposal Markov  kernel. 
We denote by $(\L^2(\pi),\|\cdot\|_{2})$ the usual Lebesgue space associated with the probability measure $\pi(y)dy$. For convenience, $\|\cdot\|_{2}$ also denotes the operator norm on $\L^2(\pi)$, namely: if $U$ is a bounded linear operator on $\L^2(\pi)$, then $\|U\|_2 := \sup_{\|f\|_2=1}\|Uf\|_2$. Since
\begin{equation} \label{bal-eq}
t(x,y)\pi(x) = t(y,x)\pi(y), 
\end{equation}
we know that $P$ is reversible with respect to $\pi$ and that $\pi$ is $P$-invariant (e.g.~see \cite{RobRos04}). Consequently $P$ is a self-adjoint operator on $\L^2(\pi)$ and $\|P\|_2=1$. 
Now define the rank-one projector $\Pi$ on $\L^2(\pi)$ by 
$$\Pi f := \pi(f) 1_{\R}\quad \text{with }\ \pi(f):=\int_\R f(x)\, \pi(x)\, dx.$$ 
Then the spectral radius of $P-\Pi$ equals to $\|P-\Pi\|_2$ since $P-\Pi$ is self-adjoint, and $P$ is said to have the spectral gap property on  $\L^2(\pi)$ if 
$$\varrho_2 \equiv \varrho_2(P) := \|P-\Pi\|_2 <1.$$ 
In this case the following property holds: 
\begin{equation} \label{ineg-gap-gene} 
\forall n\geq1, \forall f\in\L^2(\pi),\quad \|P^nf - \Pi f\|_2 \leq \varrho_2^{\, n}\, \|f\|_2. \tag{SG$_2$}
\end{equation}
The spectral gap property on $\L^2(\pi)$ of a Metropolis-Hastings kernel is of great interest, not only due to the explicit geometrical rate given by (\ref{ineg-gap-gene}), but also since it ensures that a central limit theorem (CLT) holds true for additive functional of the associated Metropolis-Hastings Markov chain  under the expected second-order moment conditions, see \cite{RobRos97}. Furthermore, the rate of convergence in the CLT is O$(1/\sqrt{n})$ under third-order moment conditions (as for the independent and identically distributed models), see details in \cite{HerPen10,FerHerLed10}. 

The quasi-compactness approach can be used to compute the rate $\varrho_2(P)$. This method is based on the notion of essential spectral radius. Indeed, first recall that the essential spectral radius of $P$ on $\L^2(\pi)$, denoted by $r_{ess}(P)$, is defined by (e.g.~see \cite{Wu04} for details): 
\begin{equation} \label{def-r-ess}
r_{ess}(P) := \lim_n(\inf \|P^n-K\|_2)^{1/n}
\end{equation}
where the above infimum is taken over the ideal of compact operators $K$ on $\L^2(\pi)$. Note that the spectral radius of $P$ is one. Then $P$ is said to be quasi-compact on $\L^2(\pi)$ if $r_{ess}(P)<1$. 
Second, if $r_{ess}(P) \leq \alpha$ for some $\alpha\in(0,1)$, then $P$ is quasi-compact on $\L^2(\pi)$, and the following properties hold: for every real number $\kappa$ such that $\alpha <\kappa<1$, the set $\cU_\kappa$ 
of the spectral values $\lambda$ of $P$ satisfying $\kappa\leq |\lambda| \leq1$ is composed of finitely many eigenvalues of $P$, each of them having a finite multiplicity (e.g.~see \cite{Hen93} for details). Third, if $P$  is quasi-compact on $\L^2(\pi)$ and satisfies usual  aperiodicity and irreducibility conditions (e.g.~see \cite{MeyTwe93}), then  $\lambda=1$ is the only spectral value of $P$ with modulus one and $\lambda=1$ is a simple eigenvalue of $P$, so that $P$ has the spectral gap property on  $\L^2(\pi)$. Finally the following property holds: either $\varrho_2(P)=\max\{|\lambda|, \lambda\in\cU_\kappa,\, \lambda\neq 1\}$ if $\cU_\kappa\neq\emptyset$, or $\varrho_2(P)\leq \kappa$ if $\cU_\kappa = \emptyset$.

This paper only focusses on the preliminary central step of the previous spectral method, that is to find an accurate bound of $r_{ess}(P)$. 
More specifically, 
we prove that, if the target density $\pi$ is positive and continuous on $\R$, and if the proposal kernel $q(\cdot,\cdot)$ is bounded continuous on $\R^2$ and satisfies (\ref{Ass-sup-fini}) for some $s>0$, then 
\begin{equation} \label{bound-r-ess-L2}
r_{ess}(P) \leq \alpha_a \qquad \text{with }\ \alpha_a:=\max(r_a\, , \, r_a'+\beta_a)
\end{equation}
where, for every $a>0$, the constants $r_a, r_a'$ and $\beta_{a}$ are defined by: 
\begin{equation} \label{ra-r'a-betaa}
r_a := \sup_{|x|\leq a} r(x),\ \ r_a' := \sup_{|x| > a} r(x),\ \ \beta_{a} := \int_{-s}^s \ \sup_{|x| > a}\sqrt{t(x,x+u)\, t(x+u,x)}\ du. 
\end{equation}
This result is illustrated in Section~\ref{sec-ress-L2} with Random Walk Metropolis-Hastings (RWMH) kernels for which the proposal Markov kernel is of the form 
$Q(x,dy):=\Delta(|x-y|)\, dy$, where $\Delta : \R\r [0,+\infty)$ is an even continuous and compactly supported function. 

In \cite{AtcPer07} the quasi-compactness of $P$ on $\L^2(\pi)$ is proved to hold provided that 1) the essential supremum of the rejection probability $r(\cdot)$ with respect to $\pi$ is bounded away from unity; 2) the operator $T$ associated with the kernel $t(x,y)dy$ is compact on $\L^2(\pi)$. Assumption~1) on the rejection probability $r(\cdot)$ is a necessary condition for $P$ to have the spectral gap property (\ref{ineg-gap-gene}) (see \cite{RobTwe96}). But this condition, which is quite generic from the definition of $r(\cdot)$ (see Remark~\ref{rk-r-2}), is far to be sufficient for $P$ to satisfy (\ref{ineg-gap-gene}). The compactness Assumption 2) of \cite{AtcPer07} is quite restrictive, for instance it is not adapted for random walk Metropolis-Hastings kernels. Here this compactness assumption is replaced by the condition $r_a'+\beta_a < 1$. As shown in the examples of Section~\ref{sec-ress-L2}, this condition is adapted to RWMH. 

 In the discrete state space case, 
a  bound for $r_{ess}(P)$ similar to (\ref{bound-r-ess-L2}) has been obtained in \cite{HerLedJAP16}. Next a bound of the spectral gap $\varrho_2(P)$ has been derived in \cite{HerLedJAP16} from a truncation method for  which the control of the essential spectral radius of $P$ is a central step. It is expected that, in the continuous state space case, 
the bound (\ref{bound-r-ess-L2}) will provide a similar way to compute the spectral gap $\varrho_2(P)$ of $P$. This issue, which is much more difficult than in the discrete case, is not addressed in this work. 

%======================================
%======================================
%==========================================
\section{An upper bound for the essential spectral radius of $P$} 
\label{sec-ress-L2}
%==========================================
%
Let us state the main result of the paper.
\begin{theo} \label{theo-r-ess}
Assume that 
\vspace*{-3mm}
\begin{enumerate}[(i)]
	\item $\pi$ is positive and continuous on $\R$; 
	\item $q(\cdot,\cdot)$ is bounded and continuous on $\R^2$, and satisfies the finite range assumption (\ref{Ass-sup-fini}).
\end{enumerate}
\vspace*{-3mm}
For $a>0$, set $\alpha_a := \max(r_a\, , \, r_a'+\beta_a)$, where the  constants $r_a, r_a'$ and $\beta_{a}$ are defined in (\ref{ra-r'a-betaa}). 
Then 
\begin{equation} \label{70}
	\forall a>0, \quad r_{ess}(P) \leq \alpha_a.
\end{equation}
\end{theo}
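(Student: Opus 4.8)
The plan is to bound $r_{ess}(P)$ by exhibiting a compact operator $K$ such that $P-K$ is block-diagonal with respect to the orthogonal decomposition $\L^2(\pi)=\cH_1\oplus\cH_2$, where $\cH_1$ (resp.\ $\cH_2$) is the subspace of functions in $\L^2(\pi)$ supported in $\{|x|\le a\}$ (resp.\ $\{|x|>a\}$), and such that the two diagonal blocks have operator norms at most $r_a$ and $r_a'+\beta_a$ respectively. Since the essential spectral radius is unchanged under compact perturbation, and since for a block-diagonal operator one has $r_{ess}(A\oplus B)=\max(r_{ess}(A),r_{ess}(B))\le\max(\|A\|_2,\|B\|_2)$ (via the characterization $r_{ess}(\cdot)=\lim_n\|\cdot^n\|_{ess}^{1/n}$ with $\|\cdot\|_{ess}=\inf_K\|\cdot-K\|_2$), this yields $r_{ess}(P)\le\alpha_a$ for the given $a$, hence for every $a>0$.

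First I would write $P=M_r+T$, where $M_r$ is multiplication by $r$ and $T$ is the integral operator with kernel $t$. By the finite range assumption (\ref{Ass-sup-fini}) one has $t(x,y)\le q(x,y)=0$ as soon as $|x-y|>s$. I then isolate the compact part of $T$: let $K$ be the integral operator whose kernel is $t(x,y)$ on the set where $|x|\le a$, or where $|x|>a$ and $|y|\le a$, and $0$ elsewhere. On that set one coordinate is bounded and $|x-y|\le s$, so its whole support is bounded; since $t$ is continuous (as $q$ and $\pi$ are continuous and $\pi>0$) and $\pi$ is continuous and positive, $K$ has a continuous, compactly supported kernel, hence is Hilbert--Schmidt on $\L^2(\pi)$ and therefore compact. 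Consequently $r_{ess}(P)=r_{ess}(P-K)$.

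Next, $P-K=M_r+\tilde T$, where $\tilde T$ has kernel $\tilde t(x,y):=t(x,y)\,1_{\{|x|>a\}}1_{\{|y|>a\}}$. Since multiplication operators respect the decomposition $\cH_1\oplus\cH_2$, and $\tilde T$ maps into $\cH_2$ while annihilating $\cH_1$, the operator $P-K$ is block-diagonal: $P-K=A\oplus B$ with $A=M_r|_{\cH_1}$ and $B=M_r|_{\cH_2}+\tilde T$. Here $\|A\|_2=\sup_{|x|\le a}r(x)=r_a$ and $\|M_r|_{\cH_2}\|_2=\sup_{|x|>a}r(x)=r_a'$, so by the triangle inequality it suffices to show $\|\tilde T\|_2\le\beta_a$, which gives $\|B\|_2\le r_a'+\beta_a$ and concludes the argument via the direct-sum formula above.

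The crux is the bound $\|\tilde T\|_2\le\beta_a$, which I would obtain by a Schur test after symmetrization. Conjugating by the unitary $V:\L^2(\pi)\to\L^2(\R,dx)$, $Vf:=\sqrt{\pi}\,f$, turns $\tilde T$ into the integral operator on $\L^2(\R,dx)$ with kernel $\sqrt{\pi(x)/\pi(y)}\,\tilde t(x,y)$; by the reversibility identity (\ref{bal-eq}) this kernel equals $\sqrt{\tilde t(x,y)\,\tilde t(y,x)}$, which is symmetric and nonnegative. The Schur test with constant weight then gives $\|\tilde T\|_2\le\sup_{x}\int_\R\sqrt{\tilde t(x,y)\tilde t(y,x)}\,dy$. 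Restricting the supremum to $|x|>a$ and setting $y=x+u$, the finite range assumption confines $u$ to $[-s,s]$ and $1_{\{|x+u|>a\}}\le1$, so $\sup_{|x|>a}\int_{-s}^{s}\sqrt{t(x,x+u)t(x+u,x)}\,du\le\int_{-s}^{s}\sup_{|x|>a}\sqrt{t(x,x+u)t(x+u,x)}\,du=\beta_a$. The main obstacle is precisely this last step: carrying out the symmetrization so that the \emph{geometric mean} $\sqrt{t(x,x+u)t(x+u,x)}$ rather than a cruder quantity appears in the Schur bound, which is what makes the constant $\beta_a$ sharp enough; the block-diagonalization and the Hilbert--Schmidt compactness are then routine.
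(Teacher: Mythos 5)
Your proof is correct, but it takes a genuinely different route from the paper's. The paper never block-diagonalizes: it keeps the singly truncated operators $R_a$, $R_{a^c}$, $T_a$, $T_{a^c}$ (cut-offs by $1_{[-a,a]}$ and its complement acting on the output variable only), proves by induction that $P^n = K_n + R_a^n + (R_{a^c}+T_{a^c})^n$ with $K_n$ compact --- which forces separate compactness lemmas for the cross terms $T_{a^c}R_a$ and $(R_{a^c}+T_{a^c})^nR_a$ --- and then applies the defining formula (\ref{def-r-ess}) directly. Your choice to absorb the $\{|x|>a,\,|y|\le a\}$ piece of the kernel into the compact operator $K$ makes $P-K$ exactly block-diagonal with respect to $\cH_1\oplus\cH_2$, which eliminates all cross terms and replaces the induction by two standard facts (invariance of $r_{ess}$ under compact perturbation, and $r_{ess}$ of a direct sum bounded by the block norms); this is a real simplification of the structural half of the argument. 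The analytic half differs as well: the paper proves $\|T_{a^c}\|_2\le\beta_a$ for the singly truncated kernel $1_{\{|x|>a\}}t(x,y)$ by a Riemann-sum/Minkowski-type argument (Lemma~\ref{lem-ineg}), which uses the continuity of $q$ and $\pi$ and a density argument, applying detailed balance only at the end; you instead conjugate by the unitary $f\mapsto\sqrt{\pi}f$, use (\ref{bal-eq}) to symmetrize the kernel into the geometric mean $\sqrt{\tilde t(x,y)\,\tilde t(y,x)}$, and conclude with the Schur test. Your route needs only measurability and boundedness where Lemma~\ref{lem-ineg} uses continuity, so it is marginally more general; what the paper's route buys is that it works entirely from formula (\ref{def-r-ess}) without invoking the Schur test or Calkin-algebra properties.

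One small slip, easily repaired: the kernel of your compact operator $K$ is \emph{not} continuous (the indicator cut-offs destroy continuity along the lines $|x|=a$, $|y|=a$). What is true, and what your argument actually needs, is that this kernel is bounded with compact support (contained in $[-(a+s),a+s]^2$ by the finite range assumption); since $\pi$ is continuous, positive, hence bounded below on compacts, the kernel taken with respect to $\pi(y)\,dy$ is square-integrable for $\pi\otimes\pi$, so $K$ is Hilbert--Schmidt on $\L^2(\pi)$ and therefore compact. With that wording fixed, the proof is complete.
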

Theorem~\ref{theo-r-ess} is proved in Section~\ref{sub-pf-ress}  from Formula (\ref{def-r-ess}) by using a suitable decomposition of the iterates $P^n$  involving some Hilbert-Schmidt operators.
\begin{rem} \label{rem-V-geo}
Assume that the assumptions $(i)$-$(ii)$ of Theorem~\ref{theo-r-ess} hold. Then, if there exists some $a>0$ such that $\alpha_a <1$, $P$ is quasi-compact on $\L^2(\pi)$. Suppose moreover that the proposal Markov kernel $Q(x,dy)$ satisfies usual irreducibility and aperiodicity conditions. Then $P$ has the spectral gap property on  $\L^2(\pi)$. Actually, if $q$ is symmetric (i.e.~$q(x,y)=q(y,x)$), it can be easily proved that, under the condition $r_a'+\beta_a<1$, $P$ satisfies the so-called drift condition with respect to $V(x):=1/\sqrt{\pi(x)}$, so that $P$ is $V$-geometrically ergodic, that is $P$ has the spectral gap property on the space $(\cB_V,\|\cdot\|_V)$ composed of the functions $f : \R\r\R$ such that $\|f\|_V := \sup_{x\in\R} |f(x)|/V(x) < \infty$. If furthermore $\int_\R\sqrt{\pi(x)}\, dx < \infty$, then the spectral gap property of $P$ on $\L^2(\pi)$ can be deduced from the $V$-geometrical ergodicity since $P$ is reversible (see \cite{RobRos97,Bax05}). However this fact does not provide a priori any 
precise bound 
on the essential spectral radius of $P$ on $\L^2(\pi)$. Indeed, mention that the results \cite[Th.~5.5]{Wu04} provide a comparison between $r_{ess}(P)$ and $r_{ess}(P_{|{\cal B}_V})$, but unfortunately, to the best of our knowledge, no accurate bound of $r_{ess}(P_{|{\cal B}_V})$ is known for Metropolis-Hasting kernels. In particular note that the general bound of $r_{ess}(P_{|{\cal B}_V})$ given in \cite[Th.~5.2]{HerLed14a} is of theoretical interest but is not precise, and that the more accurate bound of $r_{ess}(P_{|{\cal B}_V})$ given in \cite[Th.~5.4]{HerLed14a} cannot be used here since in general no iterate of $P$ is compact from $\cB_0$ to $\cB_V$, where $\cB_0$ denotes the space of bounded measurable functions $f : \R\r\R$ equipped with the supremum norm. Therefore, the $V$-geometrical ergodicity of $P$ is not discussed here since the purpose is to bound the essential spectral radius of $P$ on $\L^2(\pi)$.
\end{rem}
\begin{rem} \label{rk-r-1} 
If $\pi$ and $q$ satisfy the assumptions $(i)$-$(ii)$ of Theorem~\ref{theo-r-ess}, and if moreover $q$ satisfies the following mild additional condition 
\begin{equation} \label{add-cond-q}
\forall x\in\R,\ \exists y\in[x-s,x+s],\quad q(x,y)\, q(y,x) \neq 0,
\end{equation}
then, for every $a>0$, we have $r_a<1$, so that 
%the conclusions of Theorem~\ref{theo-r-ess} 
the quasi-compactness of $P$ on $\L^2(\pi)$ holds
provided that there exists some $a>0$ such that  $r_a'+\beta_a <1$. Note that Condition~(\ref{add-cond-q}) is clearly fulfilled if $q$ is symmetric. To prove the previous assertion on $r_a$, observe that $r(\cdot)$ is continuous on $\R$ (use Lebesgue's theorem). Consequently, if $r_a=1$ for some $a>0$, then $r(x_0)=1$ for some $x_0\in[-a,a]$, but this is impossible from the definition of $r(x_0)$ and Condition~(\ref{add-cond-q}). 
\end{rem}
\begin{rem} \label{rk-r-2} 
Actually, under the assumptions $(i)$-$(ii)$ of Theorem~\ref{theo-r-ess}, the fact that $r_a<1$ for every $a>0$, and even the stronger property 
$\sup_{x\in\R} r(x)<1$, seem to be quite generic. For instance, if $q$ is of the form $q(x,y)= \Delta(|x-y|)$ for some function $\Delta$ and if there exists $\theta>0$ such that $\pi$ is increasing on $(-\infty,-\theta]$ and decreasing on $[\theta,+\infty)$, then $\sup_{x\in\R} r(x)<1$. Thus, for every $a>0$, we have $r_a<1$ and $r_a'<1$. Indeed, first observe that $r_a<1$ for every $a>0$ from Remark~\ref{rk-r-1}. Consequently, if $\sup_{x\in\R} r(x)=1$, then there exists 
$(x_n)_n\in\R^{\N}$ such that $\lim_n|x_n| = +\infty$ and $\lim_n r(x_n)=1$. Let us prove that this property is impossible under our assumptions. To simplify, suppose that $\lim_n x_n = +\infty$. Then, from the definition of $r(\cdot)$, from our assumptions on $q$, and finally from Fatou's Lemma, it follows that, for almost every $u\in[-s,s]$ such that $\Delta(u)\neq 0$, we have  $\liminf_n\min(1,\pi(x_n+u)/\pi(x_n))=0$. But this is impossible since, if $u\in[-s,0]$ and $x_n \geq \theta+s$, then  $\pi(x_n+u) \geq \pi(x_n)$.  
\end{rem}

Theorem~\ref{theo-r-ess} is illustrated with symmetric proposal Markov kernels of the form 
$$Q(x,dy):=\Delta(x-y)\, dy$$ 
where $\Delta : \R\r [0,+\infty)$ is : 1) an even continuous function ; 2) assumed to be compactly supported on $[-s,s]$ and positive on $(-s,s)$ for some $s>0$. 
Then $q(x,y):=\Delta(x-y)$ satisfies (\ref{Ass-sup-fini}) and $t(\cdot,\cdot)$ is given by 
$$\forall u\in [-s,s],\quad t(x,x+u) := \Delta(u)\min\left(1\, ,\, \frac{\pi(x+u)}{\pi(x)}\right).$$
\begin{cor} \label{cor-lim}
Assume that $q(x,y):=\Delta(x-y)$
with $\Delta(\cdot)$ satisfying the above assumptions and that $\pi$ is an even positive continuous distribution density such that the following limit exists: 
\begin{equation} \label{tauu}
\forall u\in[0,s],\quad \tau(u) := \lim_{x\r+\infty}\frac{\pi(x+u)}{\pi(x)} \in [0,1].
\end{equation}
Assume that the set $\{u\in[0,s] : \tau(u) \neq 1\}$ has a positive Lebesgue-measure. 
Then 
$P$ is quasi-compact on $\L^2(\pi)$ with 
$$r_{ess}(P) \leq \alpha_\infty:=\max\big(r_\infty\, , \, \gamma_\infty\big)\, < 1 \quad \text{where }\ \gamma_\infty := 1 - \int_{0}^s \Delta(u)\big(1 - \tau(u)^{1/2}\big)^2 \, du.$$
\end{cor}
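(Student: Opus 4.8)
The plan is to apply Theorem~\ref{theo-r-ess} for every $a>0$ and then let $a\to+\infty$, identifying the limits of the three constants. Since $r_{ess}(P)\le\alpha_a$ holds for all $a$, it suffices to prove that $\lim_{a\to+\infty}\alpha_a=\max(r_\infty,\gamma_\infty)$, where $r_\infty:=\sup_{x\in\R}r(x)=\lim_{a\to+\infty}r_a$ (the last equality because $r_a=\sup_{|x|\le a}r(x)$ increases to $\sup_\R r$). The core ingredient is the asymptotic behaviour of the integrands as $|x|\to+\infty$. Using $q(x,y)=\Delta(x-y)$ together with the evenness of $\Delta$ and $\pi$, one checks that for $u\in[0,s]$ one has $\pi(x+u)/\pi(x)\to\tau(u)\le1$ while for $u\in[-s,0]$ one has $\pi(x+u)/\pi(x)\to 1/\tau(-u)\ge1$ as $x\to+\infty$, with the reflected statements as $x\to-\infty$. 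Since $t(x,x+u)\le\Delta(u)\in\L^1([-s,s])$, dominated convergence gives $\lim_{|x|\to+\infty}r(x)=1-\int_0^s\Delta(u)\big(1+\tau(u)\big)\,du=:r_+$, whence $\lim_{a\to+\infty}r_a'=r_+$.

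For $\beta_a$ I would first rewrite the symmetrized kernel. Because $\Delta$ is even, $\sqrt{t(x,x+u)\,t(x+u,x)}=\Delta(u)\,\min\big(\sqrt{\pi(x+u)/\pi(x)},\sqrt{\pi(x)/\pi(x+u)}\big)$, which by the limits above converges pointwise, from both sides, to $\Delta(u)\sqrt{\tau(|u|)}$ as $|x|\to+\infty$. The supremum $\sup_{|x|>a}$ decreases in $a$ to this pointwise limit (it equals the $\limsup$, which here coincides with the limit), and it is dominated by $\Delta(u)$; a further application of dominated convergence on $[-s,s]$ yields $\lim_{a\to+\infty}\beta_a=\int_{-s}^s\Delta(u)\sqrt{\tau(|u|)}\,du=2\int_0^s\Delta(u)\sqrt{\tau(u)}\,du$. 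Adding the two limits and completing the square gives $\lim_{a\to+\infty}(r_a'+\beta_a)=1-\int_0^s\Delta(u)\big(1-\sqrt{\tau(u)}\big)^2\,du=\gamma_\infty$, so $\lim_{a\to+\infty}\alpha_a=\max(r_\infty,\gamma_\infty)=\alpha_\infty$ and therefore $r_{ess}(P)\le\alpha_\infty$.

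It remains to verify $\alpha_\infty<1$. For $\gamma_\infty$ this follows because the nonnegative integrand $\Delta(u)\big(1-\sqrt{\tau(u)}\big)^2$ is strictly positive wherever $\Delta(u)>0$ (that is, $u\in(0,s)$) and $\tau(u)\neq1$; this set has positive Lebesgue measure by hypothesis, so $\int_0^s\Delta(u)(1-\sqrt{\tau(u)})^2\,du>0$ and $\gamma_\infty<1$. For $r_\infty$, observe that $r$ is continuous (Lebesgue's theorem, as in Remark~\ref{rk-r-1}), that $r(x)<1$ at every finite $x$ (symmetry of $q$ forces condition~(\ref{add-cond-q}), hence $\int_\R t(x,y)\,dy>0$), and that $r(x)\to r_+<1$ as $|x|\to+\infty$ (since $\int_0^s\Delta(u)(1+\tau(u))\,du>0$). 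A continuous function on $\R$ taking values strictly below $1$ everywhere and admitting a limit strictly below $1$ at infinity has supremum strictly below $1$; thus $r_\infty<1$, and $\alpha_\infty=\max(r_\infty,\gamma_\infty)<1$, so $P$ is quasi-compact on $\L^2(\pi)$.

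The main obstacle is the interchange of the supremum $\sup_{|x|>a}$ and the integral defining $\beta_a$: one must argue carefully that $u\mapsto\sup_{|x|>a}\sqrt{t(x,x+u)t(x+u,x)}$ decreases pointwise to $\Delta(u)\sqrt{\tau(|u|)}$, and then invoke dominated convergence, rather than naively swapping the two limiting operations. Once the one-sided asymptotics of $\pi(x+u)/\pi(x)$ are pinned down through the evenness of $\pi$, the remaining steps are routine dominated-convergence arguments on the finite interval $[-s,s]$.
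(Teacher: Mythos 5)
Your proposal is correct and follows essentially the same route as the paper's own proof: apply Theorem~\ref{theo-r-ess} for every $a>0$, identify $\lim_{a\r+\infty} r_a'$ and $\lim_{a\r+\infty}\beta_a$ through the one-sided asymptotics of $\pi(x+u)/\pi(x)$ (via the evenness of $\pi$ and $\Delta$, and the extension $\tau(-u)=1/\tau(u)$) together with dominated convergence, and conclude $r_\infty'+\beta_\infty=\gamma_\infty<1$ from the positive-measure hypothesis. Your argument that $r_\infty<1$ (continuity of $r$, $r(x)<1$ pointwise by Remark~\ref{rk-r-1}, and $\lim_{|x|\r+\infty}r(x)=r_+<1$) is a minor repackaging of the paper's combination of Remark~\ref{rk-r-1} with the bound $r_\infty'\leq 1/2$, and is equally valid.
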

\begin{proof}{}
We know from Theorem~\ref{theo-r-ess} that, for any $a>0$, $r_{ess}(P) \leq \max\big(r_a\, , \, r_a'+\beta_a\big)$ with $r_a := \sup_{|x|\leq a} r(x)$, $r_a' := \sup_{|x| > a} r(x)$. It is easily checked that 
$$\beta_a = \int_{-s}^s \Delta(u)\sup_{|x| > a}\min\left(\sqrt{\frac{\pi(x+u)}{\pi(x)}},\sqrt{\frac{\pi(x)}{\pi(x+u)}}\right)\, du.$$
Note that 
$$\forall x\in\R,\quad r(x) = 1 - \int_{-s}^s \Delta(u) \min\bigg(1,\frac{\pi(x+u)}{\pi(x)}\bigg)\, du.$$
For $u\in[-s,0]$, $\tau(u)$ is defined as in (\ref{tauu}). Then 
$$\forall u\in[-s,s],\quad \tau(u) = \lim_{y\r+\infty}\frac{\pi(y)}{\pi(y-u)} = \frac{1}{\tau(-u)}$$
with the convention $1/0=+\infty$. Thus, for every  $u\in[-s,0]$, we have 
$\tau(u)\in[1,+\infty]$. Moreover we obtain for every $u\in [-s,s]$: 
$$\lim_{x\r-\infty}\frac{\pi(x+u)}{\pi(x)} = \tau(-u).$$
since $\pi$ is an even function. We have for every $a>0$ 
$$r_a' = 1 - \min\left(\inf_{x<-a}\int_{-s}^s \Delta(u) \min\bigg(1,\frac{\pi(x+u)}{\pi(x)}\bigg)\, du\ ,\ \inf_{x>a} \int_{-s}^s \Delta(u) \min\bigg(1,\frac{\pi(x+u)}{\pi(x)}\bigg)\, du\right).$$
Moreover it follows from dominated convergence theorem and from the above remarks that 
$$\lim_{x\r\pm\infty} \int_{-s}^s \Delta(u) \min\bigg(1,\frac{\pi(x+u)}{\pi(x)}\bigg)\, du = \int_{-s}^s \Delta(u) 
\min\big(1,\tau(\pm u)\big)\, du$$
from which we deduce that 
\begin{eqnarray*}
\lefteqn{r_\infty' := \lim_{a\r+\infty}r_a'}\\
 &=&  1 - \min\left(\int_{-s}^s \Delta(u) 
\min\big(1,\tau(-u)\big)\, du\ ,\ \int_{-s}^s \Delta(u) 
\min\big(1,\tau(u)\big)\, du \right) \\ 
&=&  1 - \int_{-s}^s \Delta(u) \min\big(1,\tau(u)\big)\, du \qquad \text{(since $\Delta$ is an even function)}\\
&=& 1 - \int_{-s}^0 \Delta(u) \, du - \int_{0}^s \Delta(u) \tau(u)\, du \quad \text{(since } \tau(u)\leq 1\text{ for } u \in[0,s], \tau(u)\geq 1 \text{ for } u \in[-s,0]) \\
&=& 1 - \int_{0}^s \Delta(u)\, \big[1+\tau(u)\big] \, du. 
\end{eqnarray*}
Note that, for every $a>0$, we have $r_a<1$ from Remark~\ref{rk-r-1}. Moreover $r_\infty'\leq 1/2$ from the last equality. Thus $r_\infty := \sup_{x\in\R} r(x)<1$.  Next we obtain for every $a>0$ 
$$
\beta_a = \int_{-s}^s \Delta(u)\max\left[\sup_{x<-a}\min\left(\sqrt{\frac{\pi(x+u)}{\pi(x)}},\sqrt{\frac{\pi(x)}{\pi(x+u)}}\right),\ \sup_{x>a}\min\left(\sqrt{\frac{\pi(x+u)}{\pi(x)}},\sqrt{\frac{\pi(x)}{\pi(x+u)}}\right)\right]\, du$$
and again we deduce from dominated convergence theorem and from the above remarks that 
\begin{eqnarray*}
\beta_\infty := \lim_{a\r+\infty}\beta_a 
&=& \int_{-s}^s \Delta(u) \max\bigg[\min\left(\tau(-u)^{1/2},\frac{1}{\tau(-u)^{1/2}}\right)\, ,\, \min\left(\tau(u)^{1/2},\frac{1}{\tau(u)^{1/2}}\right)\bigg]\, du \\
&=& \int_{-s}^s \Delta(u) \min\left(\tau(u)^{1/2},\frac{1}{\tau(u)^{1/2}}\right)\, du \qquad \text{(since $\tau(-u) = \frac{1}{\tau(u)}$)}\\
&=& \int_{-s}^0 \Delta(u)\, \tau(u)^{-1/2}\, du + \int_{0}^s \Delta(u)\, \tau(u)
^{1/2}\, du \\
&=& \int_{-s}^0 \Delta(u)\, \tau(-u)^{1/2}\, du + \int_{0}^s \Delta(u)\, \tau(u)
^{1/2}\, du \\
&=& 2\int_{0}^s \Delta(u)\, \tau(u)^{1/2}\, du. 
\end{eqnarray*}
Thus 
$$r_\infty'+ \beta_\infty = 1 - \int_{0}^s \Delta(u)\big[1+ \tau(u) - 2\tau(u)^{1/2}\big]\, du = 1 - \int_{0}^s \Delta(u)\, \big(1 - \sqrt{\tau(u)}\big)^2\, du\ <1$$
since by hypothesis the set $\{u\in[0,s] : \tau(u) \neq 1\}$ has a positive Lebesgue-measure. 

Since $r_{ess}(P) \leq \max\big(r_a\, , \, r_a'+\beta_a\big)$ holds for every $a>0$, we obtain that $r_{ess}(P) \leq \max\big(r_\infty\, , \, r_\infty'+\beta_\infty\big)<1$. Thus $P$ is quasi-compact on $\L^2(\pi)$. 
\end{proof}

\begin{aex} [Laplace distribution]
Let $\pi(x) = e^{-|x|}/2$ be the Laplace distribution density, and set $q(x,y):=\Delta(x-y)$ with $\Delta(u):= (1-|u|)\, 1_{[-1,1]}(u)$. Then 
$$\forall u\in[0,1],\quad \tau(u) := \lim_{x\r+\infty}\frac{\pi(x+u)}{\pi(x)} = e^{-u}.$$
Then 
$$\gamma_\infty = 1 - \int_{0}^1 (1-u)\big(1 - e^{-u/2}\big)^2 \, du = 8\, e^{-1/2} - e^{-1} - 7/2. $$% \approx 0.9843.$$
From Corollary~\ref{cor-lim}, $P$ is quasi-compact on $\L^2(\pi)$ with $r_{ess}(P) \leq \max\big(1-1/e\, , \, 8\, e^{-1/2} - e^{-1} - 7/2\big) = 8\, e^{-1/2} - e^{-1} - 7/2 \approx 0.9843$ since $r_\infty := \sup_{x\in\R} r(x) \le 1-1/e$. 
\end{aex}
\begin{aex} [Gauss distribution]
Let $\pi(x) = e^{-x^2/2}/\sqrt{2\pi}$ be the Gauss distribution density, and set $q(x,y):=\Delta(|x-y|)$ with $\Delta(u):= (1-|u|)\, 1_{[-1,1]}(u)$. Then 
$$\forall u\in(0,1],\quad \tau(u) := \lim_{x\r+\infty}\frac{\pi(x+u)}{\pi(x)} = 0,$$
so that 
$$\gamma_\infty = 1 - \int_{0}^1 (1-u) \, du = \frac{1}{2}.$$ 
From Corollary~\ref{cor-lim}, $P$ is quasi-compact on $\L^2(\pi)$ with $r_{ess}(P) \leq \max\big(0.156\, , \, 0.5\big) =0.5$ since $r_\infty \le 1-e^{-1/2}-e^{1/8}\int_0^1 (1-u)e^{-(u+1)^2/2}du\le 0.156$. 
\end{aex}

 In view of the quasi-compactness approach presented in Introduction for computing the rate $\varrho_2(P)$ in (\ref{ineg-gap-gene}), the bound $r_{ess}(P) \leq 0.5$ obtained for Gauss distribution (for instance) implies that, for every $\kappa\in(0.5,1)$,  the set of the spectral values $\lambda$ of $P$ on $\L^2(\pi)$ satisfying $\kappa\leq |\lambda| \leq 1$ is composed of finitely many eigenvalues of finite multiplicity. Moreover, from aperiodicity and irreducibility, $\lambda=1$ is the only eigenvalue of $P$ with modulus one and it is a simple eigenvalue of $P$. Consequently the spectral gap property (\ref{ineg-gap-gene}) holds with $\varrho_2(P)$ given by 
\vspace*{-4mm}
\begin{itemize}
	\item $\varrho_2(P)=\max\big\{|\lambda|, \lambda\in\cU_\kappa,\, \lambda\neq 1\big\}$ if $\cU_\kappa\neq\emptyset$, 
	\item $\varrho_2(P)\leq \kappa$ if $\cU_\kappa = \emptyset$ (in particular, if for every $\kappa\in(0.5,1)$ we have $\cU_\kappa = \emptyset$, then we could conclude that $\varrho_2(P)\leq 0.5$). 
\end{itemize}
\vspace*{-4mm}

The numerical computation of the eigenvalues $\lambda\in\cU_\kappa$, $\lambda\neq 1$,  is a difficult issue. Even to know whether the set $\cU_\kappa\setminus\{1\}$ is empty or not seems to be difficult. In the discrete state space case (i.e~$P=(P(i,j))_{i,j\in\N}$), this problem has been solved by using a weak perturbation method involving some finite truncated matrices derived from $P$ (see \cite{HerLedJAP16}). In the continuous state space case, a perturbation method could be also considered, but it raises a priori difficult theoretical and numerical issues.

%================================================
%================================================
\section{Proof of Theorem~\ref{theo-r-ess}} \label{sub-pf-ress}
%================================================
%================================================

For any bounded linear operator $U$ on $\L^2(\pi)$ we define 
$$\forall f\in\L^2(\pi),\quad U_af := 1_{[-a,a]}\cdot Uf \quad \text{and} \quad U_{a^c}f := 1_{\R\setminus[-a,a]}\cdot Uf.$$
Obviously $U_a$ and $U_{a^c}$ are bounded linear operators on $\L^2(\pi)$, and 
$U = U_a + U_{a^c}$. Define $Rf = r f$ with function $r(\cdot)$ given in (\ref{def-P-noyau}). Recall that $T$ is the operator associated with kernel $T(x,dy)= t(x,y)dy$. Then the M-H kernel $P$ defined in (\ref{def-P}) writes as follows: 
$$P = R+T = R_a + R_{a^c} + T_a + T_{a^c}$$
with
$R_{a^c}R_a =R_aR_{a^c} = 0$ and $R_a T_{a^c}=0$.
\begin{lem} \label{Ta-comp} The operators $T_a, T_{a^c} R_a$ and $(R_{a^c} + T_{a^c})^n R_a$ for any $n\geq 1$ are compact on $\L^2(\pi)$. 
\end{lem}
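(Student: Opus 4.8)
The plan is to prove compactness of the first two operators by realizing them as Hilbert--Schmidt operators on $\L^2(\pi)$, and then to reduce the whole family $(R_{a^c}+T_{a^c})^n R_a$ to the second case by a short algebraic argument built on the orthogonality relation $R_{a^c}R_a=0$ already recorded above. The starting observation is that an integral operator $Kf(x)=\int_\R k(x,y)\,f(y)\,dy$ acts on $\L^2(\pi)$ as $Kf(x)=\int_\R \frac{k(x,y)}{\pi(y)}\,f(y)\,\pi(y)\,dy$, so it is Hilbert--Schmidt (hence compact) as soon as
$$\int_\R\int_\R \frac{k(x,y)^2}{\pi(y)}\,\pi(x)\,dx\,dy < \infty.$$
For $T_a$ the relevant kernel is $1_{[-a,a]}(x)\,t(x,y)$, and for $T_{a^c}R_a$ it is $1_{\R\setminus[-a,a]}(x)\,t(x,y)\,1_{[-a,a]}(y)\,r(y)$, so in each case it suffices to check finiteness of the corresponding double integral.

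The only real work, and the step I expect to be the main obstacle, is to verify finiteness of these two integrals; this is exactly where the finite range assumption (\ref{Ass-sup-fini}) is decisive. Since $t(x,y)\leq q(x,y)$ vanishes whenever $|x-y|>s$, in the integral for $T_a$ the variable $x$ is confined to $[-a,a]$ and then $y$ to $[-a-s,a+s]$, while in the integral for $T_{a^c}R_a$ the variable $y$ is confined to $[-a,a]$ and then $x$ to $[-a-s,a+s]$. In both cases the effective domain of integration is a compact rectangle. On such a compact set, the continuity and strict positivity of $\pi$ from assumption $(i)$ give $0<\inf\pi\leq\sup\pi<\infty$, so the factor $\pi(x)/\pi(y)$ is bounded; moreover $t(x,y)\leq q(x,y)$ is bounded by assumption $(ii)$ and $r(\cdot)\leq 1$. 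Thus each integrand is bounded on a set of finite Lebesgue measure, the double integrals are finite, and $T_a$ and $T_{a^c}R_a$ are Hilbert--Schmidt, hence compact.

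For the last family I would invoke the two-sided ideal property of the compact operators. Setting $S:=R_{a^c}+T_{a^c}$, which is a bounded operator, the relation $R_{a^c}R_a=0$ gives $S R_a = R_{a^c}R_a + T_{a^c}R_a = T_{a^c}R_a$. Consequently, for every $n\geq 1$,
$$(R_{a^c}+T_{a^c})^n R_a = S^{\,n-1}\,(S R_a) = S^{\,n-1}\,(T_{a^c}R_a),$$
which is the composition of the bounded operator $S^{\,n-1}$ with the compact operator $T_{a^c}R_a$, and is therefore compact. Once the Hilbert--Schmidt estimates of the previous paragraph are in place, this third assertion is immediate; all the difficulty is thus concentrated in the finite-range localization together with the control of $\pi$ away from $0$ and $\infty$ on compact sets.
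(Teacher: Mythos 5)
Your proof is correct, and its skeleton is the same as the paper's: show $T_a$ and $T_{a^c}R_a$ are Hilbert--Schmidt (hence compact), then use $R_{a^c}R_a=0$ and the two-sided ideal property of the compact operators to handle $(R_{a^c}+T_{a^c})^nR_a$ (the paper phrases this last step as an induction, but it is the same computation as your $S^{n-1}(SR_a)=S^{n-1}T_{a^c}R_a$). Where you genuinely differ is in the verification of the Hilbert--Schmidt property, and the difference is instructive. You invoke the finite range assumption (\ref{Ass-sup-fini}) to confine the integration to a compact rectangle, and then use continuity and positivity of $\pi$ to bound $\pi(x)/\pi(y)$ there. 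The paper never uses (\ref{Ass-sup-fini}) in this lemma: for $T_a$ it applies the detailed balance relation (\ref{bal-eq}) to rewrite the kernel with respect to $\pi(y)\,dy$ as $t_a(x,y)=1_{[-a,a]}(x)\,t(y,x)/\pi(x)$, so that the indicator in $x$ controls the inverse density; for $T_{a^c}R_a$ the factor $1_{[-a,a]}(y)$ already sits on the same variable as $\pi(y)^{-1}$. In both cases the kernel is \emph{bounded}, and a bounded kernel lies in $\L^2(\pi\otimes\pi)$ automatically because $\pi\otimes\pi$ is a probability measure --- no localization in the other variable is needed. So, contrary to your remark that the finite range assumption is ``decisive'' here, it is dispensable for this lemma (it becomes essential only later, in the proof of Lemma~\ref{lem-ineg}); your route uses it as a substitute for the detailed-balance trick, which is perfectly legitimate under the stated hypotheses, but slightly less general.
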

\begin{proof}{}
Using the detailed balance equation (\ref{bal-eq}),  we obtain for any $f\in\L^2(\pi)$ 
\begin{eqnarray*}
(T_af)(x) &=& 1_{[-a,a]}(x)\int_\R f(y)\, t(x,y)\, dy = \int_\R f(y)\, 1_{[-a,a]}(x)\, \frac{t(x,y)}{\pi(y)}\, \pi(y)\, dy \\
&=& \int_\R f(y)\, t_a(x,y) \, \pi(y)\, dy \qquad \text{with }\ t_a(x,y) := 1_{[-a,a]}(x)\, \frac{t(y,x)}{\pi(x)}. 
\end{eqnarray*} 
Function $q(\cdot,\cdot)$ is supposed to be bounded on $\R^2$, so is $t(\cdot,\cdot)$. From $\inf_{|x|\leq a} \pi(x)>0$ it follows that $t_a(\cdot,\cdot)$ is bounded on $\R^2$. Consequently $t_a\in\L^2(\pi\otimes\pi)$, so that $T_a$ is a Hilbert-Schmidt operator on $\L^2(\pi)$. In particular $T_a$ is compact on $\L^2(\pi)$. 

Now observe that 
$$
(T_{a^c}R_af)(x) = 1_{\R\setminus[-a,a]}(x)\int_\R 1_{[-a,a]}(y)\, r(y)\, f(y)\, t(x,y)\, dy = \int_\R f(y)\, k_a(x,y)\, \pi(y)\, dy
$$
where $k_a(x,y) := 1_{[-a,a]}(y)\, 1_{\R\setminus[-a,a]}(x)\,  r(y)\, t(x,y)\, \pi(y)^{-1}$. Then $T_{a^c}R_a$ is a Hilbert-Schmidt operator on $\L^2(\pi)$ since $k_a(\cdot,\cdot)$ is bounded on $\R^2$ from our assumptions. Thus $T_{a^c}R_a$ is compact on $\L^2(\pi)$. 

Let us prove by induction that $(R_{a^c} + T_{a^c})^n R_a$ is compact on $\L^2(\pi)$ for any $n \geq 1$. For $n=1$, $(R_{a^c} + T_{a^c}) R_a= T_{a^c} R_a $ is compact. 
Next, 
$$(R_{a^c} + T_{a^c})^n R_a = (R_{a^c} + T_{a^c})^{n-1} (R_{a^c} + T_{a^c})R_a = (R_{a^c} + T_{a^c})^{n-1} T_{a^c}R_a.$$
Since $T_{a^c}R_a$ 
is compact on $\L^2(\pi)$ and the set of compact operators on $\L^2(\pi)$    
is an ideal,  $(R_{a^c} + T_{a^c})^n R_a$ is compact on $\L^2(\pi)$. 
\end{proof}
\begin{lem} \label{lem-dec-op}
For every $n\geq 1$, there exists a compact operator $K_n$ on $\L^2(\pi)$ such that 
$$P^n = K_n + R_a^n + (R_{a^c} + T_{a^c})^n.$$
\end{lem}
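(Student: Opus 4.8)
The plan is to argue modulo the ideal of compact operators on $\L^2(\pi)$, writing $U\equiv V$ to mean that $U-V$ is compact. Abbreviate $B:=R_{a^c}+T_{a^c}$, so the decomposition recorded before the lemma reads $P=R_a+B+T_a$, with $T_a$ compact by Lemma~\ref{Ta-comp}; hence $P\equiv R_a+B$. The single algebraic identity driving everything is $R_a\,B=0$: indeed $R_a B=R_a R_{a^c}+R_a T_{a^c}=0$ by the relations $R_{a^c}R_a=R_aR_{a^c}=0$ and $R_aT_{a^c}=0$ stated just before the lemma. Consequently $R_a^{\,k}B=R_a^{\,k-1}(R_aB)=0$ for every $k\geq1$.

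I would then prove the stated identity by induction on $n$, taking $K_n:=P^n-R_a^{\,n}-B^n$ and checking it is compact. For $n=1$ one has $K_1=T_a$, which is compact. Assuming $K_n$ compact, write $P^{n+1}=P^nP=(K_n+R_a^{\,n}+B^n)(R_a+B+T_a)$ and expand. The term $K_nP$ is compact because the compact operators form a two-sided ideal. In $R_a^{\,n}P=R_a^{\,n+1}+R_a^{\,n}B+R_a^{\,n}T_a$ the middle term vanishes since $R_a^{\,n}B=0$, and the last is compact by the ideal property. In $B^nP=B^nR_a+B^{n+1}+B^nT_a$ the first term $B^nR_a=(R_{a^c}+T_{a^c})^nR_a$ is compact by Lemma~\ref{Ta-comp} and the last is compact by the ideal property. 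Collecting, $P^{n+1}=R_a^{\,n+1}+B^{n+1}+K_{n+1}$ with $K_{n+1}$ compact, closing the induction.

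A more transparent variant is to expand $(R_a+B)^n$ directly: since $R_aB=0$, a word in $R_a$ and $B$ is nonzero only when no factor $R_a$ is immediately followed by $B$, i.e.\ only for the words $B^{k}R_a^{\,n-k}$ with $0\leq k\leq n$. The two extreme words are $R_a^{\,n}$ (at $k=0$) and $B^n$ (at $k=n$), while each intermediate word $B^kR_a^{\,n-k}$ with $1\leq k\leq n-1$ factors as $\big(B^kR_a\big)R_a^{\,n-k-1}$ and is compact by Lemma~\ref{Ta-comp} and the ideal property. Since the homomorphism onto the Calkin quotient preserves powers, $P\equiv R_a+B$ gives $P^n\equiv(R_a+B)^n\equiv R_a^{\,n}+B^n$, which is the claim with $B^n=(R_{a^c}+T_{a^c})^n$.

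There is no genuine analytic obstacle at this stage: the compactness input has already been isolated in Lemma~\ref{Ta-comp} through the Hilbert--Schmidt estimates on $T_a$ and $T_{a^c}R_a$, so the present step is purely algebraic bookkeeping. The only point requiring care is to verify that the relation $R_aB=0$ eliminates \emph{every} mixed word except the compact ones $B^kR_a^{\,n-k}$, so that the surviving ``diagonal'' terms are exactly $R_a^{\,n}$ and $(R_{a^c}+T_{a^c})^n$; this is precisely what $R_aB=0$, together with Lemma~\ref{Ta-comp}, guarantees.
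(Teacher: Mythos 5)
Your proof is correct and follows essentially the same route as the paper's: the same induction on $n$, driven by the relations $R_aR_{a^c}=R_aT_{a^c}=0$ (i.e.\ $R_aB=0$), the compactness of $T_a$ and $(R_{a^c}+T_{a^c})^nR_a$ from Lemma~\ref{Ta-comp}, and the two-sided ideal property of compact operators. Your Calkin-algebra variant is a clean repackaging of the same ingredients rather than a genuinely different argument, so no further comparison is needed.
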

\begin{proof}{}
For $n=1$ we have $P = K_1 + R_a + (R_{a^c} + T_{a^c})$ with $K_1:= T_a$ compact by Lemma~\ref{Ta-comp}. Now assume that the conclusion of Lemma~\ref{lem-dec-op} holds for some $n\geq 1$. Since 
the set of compact operators on $\L^2(\pi)$ forms a two-sided operator ideal, we obtain the following equalities for some compact operator $K'_{n+1}$ on $\L^2(\pi)$: 
\begin{eqnarray*}
P^{n+1} = P^nP &=& \big(K_n + R_a^n + (R_{a^c} + T_{a^c})^n\big) \big(K_1 + R_a + (R_{a^c} + T_{a^c})\big) \\
&=& K'_{n+1} + R_a^{n+1} + R_a^nR_{a^c} + R_a^nT_{a^c} + (R_{a^c} + T_{a^c})^nR_a + (R_{a^c} + T_{a^c})^{n+1} \\
&=& \big[K'_{n+1} + (R_{a^c} + T_{a^c})^nR_a\big] + R_a^{n+1} + (R_{a^c} + T_{a^c})^{n+1}.
\end{eqnarray*}
 Then the expected conclusion holds true for $P^{n+1}$ since $K_{n+1} := K'_{n+1} + (R_{a^c} + T_{a^c})^nR_a$ is compact on $\L^2(\pi)$ from Lemma~\ref{Ta-comp}.
\end{proof}

Theorem~\ref{theo-r-ess} is deduced from the next proposition which states that $\|T_{a^c}\|_2 \leq \beta_a$. Indeed, observe that $\|R_a\|_2\leq r_a$ and $\|R_{a^c}\|_2 \leq r_a'$. Set $\alpha_a:=\max\big(r_a\, , \, r_a'+\beta_a\big)$. Then Lemma~\ref{lem-dec-op} and $\|T_{a^c}\|_2 \leq \beta_a$ give 
$$\|P^n - K_n\|_2 \leq \|R_a\|_2^n + \big(\|R_{a^c}\|_2 + \|T_{a^c}\|_2\big)^n \leq 2\, \alpha_a^n.$$
The expected inequality $r_{ess}(P) \leq \alpha_a$ in Theorem~\ref{theo-r-ess} then follows from Formula~(\ref{def-r-ess}). 
\begin{pro} \label{lem-norm-alpha}
For any $a >0$, we have $\|T_{a^c}\|_2 \leq \beta_a$. 
\end{pro}
\begin{proof}{}
Lemma~\ref{lem-ineg} below shows that, for any bounded and continuous function $f : \R\r\C$, we have $\|T_{a^c}f\|_{\L^2(\pi)} \leq \beta_a\|f\|_{\L^2(\pi)}$. Then Inequality $\|T_{a^c}\|_2 \leq \beta_a$ of Proposition~\ref{lem-norm-alpha} follows from a standard density argument using that $\|T\|_2 \leq \|P\|_2 =1$ and that the space of bounded and continuous functions from $\R$ to $\C$ is dense in $\L^2(\pi)$.  
\end{proof}
\begin{lem} \label{lem-ineg}
For any bounded and continuous function $f : \R\r\C$, we have 
\begin{gather} 
\|T_{a^c}f\|_{\L^2(\pi)} \leq \int_{-s}^s\bigg[\int_{\{|x|>a\}}  |f(x+u)|^2\, t(x,x+u)^2\, \pi(x)\, dx\bigg]^{\frac{1}{2}}\, du\leq \beta_a\|f\|_{\L^2(\pi)}. \label{ineg-lem-ineg-1}
\end{gather}
\end{lem}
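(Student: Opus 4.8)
The plan is to start from the explicit kernel representation of $T_{a^c}$. Since $t(x,y)\leq q(x,y)$ and $q$ satisfies the finite range assumption (\ref{Ass-sup-fini}), the integral defining $(Tf)(x)$ only involves $y\in[x-s,x+s]$; writing $y=x+u$ with $u\in[-s,s]$ gives
$$(T_{a^c}f)(x) = 1_{\{|x|>a\}}(x)\int_{-s}^s f(x+u)\, t(x,x+u)\, du,$$
so that, taking the $\L^2(\pi)$-norm,
$$\|T_{a^c}f\|_{\L^2(\pi)}^2 = \int_{\{|x|>a\}}\bigg|\int_{-s}^s f(x+u)\, t(x,x+u)\, du\bigg|^2\pi(x)\, dx.$$

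For the first inequality in (\ref{ineg-lem-ineg-1}) I would apply the Minkowski integral inequality with exponent $p=2$, viewing the inner $u$-integral as a continuous superposition, indexed by $u\in[-s,s]$, of the functions $x\mapsto f(x+u)\, t(x,x+u)$ on the measure space $(\{|x|>a\},\pi(x)\,dx)$. This moves the $u$-integration outside the $\L^2$-norm and produces exactly the middle term of (\ref{ineg-lem-ineg-1}).

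The second inequality is where reversibility enters. For fixed $u$, I would invoke the detailed balance equation (\ref{bal-eq}) in the form $t(x,x+u)\,\pi(x)=t(x+u,x)\,\pi(x+u)$ to rewrite $t(x,x+u)^2\,\pi(x) = t(x,x+u)\,t(x+u,x)\,\pi(x+u)$. Bounding the product $t(x,x+u)\,t(x+u,x)$ by its supremum over $\{|x|>a\}$ and then performing the change of variable $y=x+u$ (which enlarges $\{|y-u|>a\}$ to all of $\R$) yields
$$\bigg[\int_{\{|x|>a\}}|f(x+u)|^2\,t(x,x+u)^2\,\pi(x)\,dx\bigg]^{1/2}\leq \Big(\sup_{|x|>a}\sqrt{t(x,x+u)\,t(x+u,x)}\Big)\,\|f\|_{\L^2(\pi)}.$$
Integrating this estimate in $u$ over $[-s,s]$ reproduces precisely the constant $\beta_a$ of (\ref{ra-r'a-betaa}), giving the right-hand bound.

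The steps are individually routine, so the only genuine obstacle is bookkeeping on integrability: one must ensure all integrals are finite before applying Minkowski's inequality and the change of variables. This is exactly why the lemma is stated for bounded continuous $f$ rather than general $f\in\L^2(\pi)$: boundedness of $t$ (hence finiteness of the supremum defining $\beta_a$) together with the boundedness of $f$ makes every manipulation legitimate, and the density argument of Proposition~\ref{lem-norm-alpha} then extends the operator-norm bound to all of $\L^2(\pi)$.
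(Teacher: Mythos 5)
Your proof is correct, and its skeleton coincides with the paper's: the same kernel representation of $T_{a^c}$ from the finite range assumption, and, for the second inequality, exactly the same use of detailed balance, the supremum bound, and the change of variable $y=x+u$. Where you genuinely diverge is the first inequality: you invoke the generalized (integral) Minkowski inequality as a known theorem, whereas the paper proves precisely this interchange by hand, discretizing the $u$-integral into Riemann sums $\chi_n(x)=\frac{2s}{n}\sum_{k=1}^n f(x+u_k)\,t(x,x+u_k)$, applying the finite triangle inequality in $\L^2(\pi)$, and passing to the limit on both sides via continuity of $f$ and $t$, boundedness, and Lebesgue's theorem. The two routes carry the same mathematical content (the paper's discretization is in effect a proof of Minkowski's integral inequality in this special case), but they buy different things. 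Your route is shorter and strictly more general: Minkowski's integral inequality for nonnegative jointly measurable integrands on $\sigma$-finite measure spaces is a Tonelli-type statement requiring no finiteness or regularity hypotheses whatsoever, so your argument gives the lemma verbatim for every $f\in\L^2(\pi)$ (taking a Borel version of $f$ to get joint measurability of $(x,u)\mapsto f(x+u)\,t(x,x+u)$, $t$ being continuous), which would make the density step in Proposition~\ref{lem-norm-alpha} superfluous. The paper's route is self-contained, using nothing beyond the finite triangle inequality, and that is the real reason the lemma is stated for bounded continuous $f$: pointwise convergence of the Riemann sums needs continuity, and the domination in Lebesgue's theorem needs boundedness. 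So your closing remark slightly misdiagnoses the role of those hypotheses --- they are not needed to legitimize Minkowski's inequality or the change of variables (nonnegativity suffices, with both sides allowed to be infinite), but only by the paper's elementary substitute for that inequality.
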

\begin{proof}{}
 Let $f : \R\r\C$ be a bounded and continuous function. Set $B:=\R\setminus[-a,a]$. Then it follows from (\ref{Ass-sup-fini}) that 
\begin{equation} \label{def-T-ac}
(T_{a^c}f)(x) = 1_{B}(x)\int_\R f(y)\, t(x,y)\, dy = 1_{B}(x)\int_{-s}^s f(x+u)\, t(x,x+u)\, du.
\end{equation}
For $n\geq 1$ and for $k=0,\ldots,n$, set $u_k:=-s + 2sk/n$ and define the following functions: $h_k(x) := 1_{B}(x)\, f(x+u_k)\, t(x,x+u_k)$. Then 
\begin{eqnarray} \label{lim-riem-1}
\bigg[\int_B\bigg|\frac{2s}{n} 
\sum_{k=1}^n h_k(x)\bigg|^2\pi(x)\, dx\bigg]^{\frac{1}{2}} & = & \bigg\|\frac{2s}{n} 
\sum_{k=1}^n h_k\bigg\|_{\L^2(\pi)} \leq \frac{2s}{n} \sum_{k=1}^n \|h_k\|_{\L^2(\pi)} \nonumber \\
& \leq &\frac{2s}{n} \sum_{k=1}^n \bigg[\int_B  |f(x+u_k)|^2\, t(x,x+u_k)^2\, \pi(x)\, dx\bigg]^{\frac{1}{2}}.
\end{eqnarray}
Below we prove that, when $n\r+\infty$, the left hand side of (\ref{lim-riem-1}) converges to $\|T_{a^c}f\|_{\L^2(\pi)}$ and that the right hand side of (\ref{lim-riem-1}) converges to the right hand side of the first inequality in (\ref{ineg-lem-ineg-1}). Define 
$$\forall x\in B,\quad \chi_n(x) := \frac{2s}{n} \sum_{k=1}^n h_k(x) = \frac{2s}{n} \sum_{k=1}^n f(x+u_k)\, t(x,x+u_k).$$ 
From Riemann's integral it follows that   
$$\forall x\in B,\quad \lim_{n\r+\infty}  \chi_n(x) = \int_{-s}^s f(x+u)\, t(x,x+u)\, du$$
since the function $u\mapsto f(x+u)\, t(x,x+u)$ is continuous on $[-s,s]$  from the assumptions of Theorem~\ref{theo-r-ess}. Note that $\sup_n\sup_{x\in B}|\chi_n(x)| < \infty$ since $f$ and $t$ are bounded functions. From Lebesgue's theorem and from (\ref{def-T-ac}), it follows that 
\begin{eqnarray} 
\lim_{n\r+\infty}\int_B\bigg|\frac{2s}{n} 
\sum_{k=1}^n h_k(x)\bigg|^2\pi(x)\, dx &=&
\int_B\bigg|\int_{-s}^s f(x+u)\, t(x,x+u)\, du\bigg|^2\pi(x)\, dx \nonumber \\
&=& \|T_{a^c}f\|_{\L^2(\pi)}^2. \label{lim-riem-2}
\end{eqnarray}
Next, observe that 
$$\frac{2s}{n} \sum_{k=1}^n \bigg[\int_B  |f(x+u_k)|^2\, t(x,x+u_k)^2\, \pi(x)\, dx\bigg]^{\frac{1}{2}} = \frac{2s}{n} \sum_{k=1}^n \psi(u_k)$$
with $\psi$ defined by  
$$\psi(u) := \bigg[\int_B  |f(x+u)|^2\, t(x,x+u)^2\, \pi(x)\, dx\bigg]^{\frac{1}{2}}. $$
Using the assumptions Theorem~\ref{theo-r-ess}, it follows from Lebesgue's theorem that $\psi$ is continuous. Consequently Riemann integral gives  
\begin{equation} \label{lim-riem-3}
\lim_{n\r+\infty} \frac{2s}{n} \sum_{k=1}^n \bigg[\int_B  |f(x+u_k)|^2\, t(x,x+u_k)^2\, \pi(x)\, dx\bigg]^{\frac{1}{2}} 
= \int_{-s}^s \psi(u)\, du.
\end{equation}
The first inequality in (\ref{ineg-lem-ineg-1}) follows from (\ref{lim-riem-1}) by using (\ref{lim-riem-2}) and (\ref{lim-riem-3}). 

Let us prove the second inequality in (\ref{ineg-lem-ineg-1}). The detailed balance equation (\ref{bal-eq}) gives
\begin{eqnarray*}
\lefteqn{\int_{-s}^s\bigg[\int_{\{|x|>a\}} |f(x+u)|^2\, t(x,x+u)^2\, \pi(x)\, dx\bigg]^{\frac{1}{2}}\, du } \\
& =  &  \int_{-s}^s\bigg[\int_{\{|x|>a\}} |f(x+u)|^2 \, t(x,x+u) \, t(x,x+u)\, \pi(x)\, dx\bigg]^{\frac{1}{2}}\, du  \\
&= &  \int_{-s}^s\bigg[\int_{\{|x|>a\}}  \, t(x,x+u) \, t(x+u,x)\, |f(x+u)|^2\ \pi(x+u)\, dx\bigg]^{\frac{1}{2}}\, du \\
&\leq & \quad \|f\|_{\L^2(\pi)}\int_{-s}^s\sup_{|x| > a}\sqrt{t(x,x+u)\, t(x+u,x)}\, du =  \|f\|_{\L^2(\pi)} \beta_a.
\end{eqnarray*}
\end{proof}
\section{Conclusion}
The study of the iterates of a Metropolis-Hasting kernel $P$ is of great interest to estimate the numbers of iterations required to achieve the convergence in the Metropolis-Hasting algorithm. In conclusion we discuss this issue by comparing the expected results  depending on whether $P$ acts on $\cB_V$ or on $\L^2(\pi)$. Recall that the $V$-geometrical ergodicity for $P$ (see Remark~\ref{rem-V-geo}) writes as: there exist  $\rho\in(0,1)$ and $C_\rho>0$ such that 
	\begin{equation} \label{ineg-gap-V} 
	\forall n\geq1,\ \forall f\in\cB_V,\quad  \|P^nf-\pi(f)\|_V \leq  C_\rho\, \rho^n\, \|f\|_V. \tag{SG$_V$}
\end{equation}
Let $\varrho_V(P)$ be the infinum bound of the real numbers $\rho$ such that (\ref{ineg-gap-V}) holds true. 
\begin{enumerate}
	\item In most of cases, the number $\varrho_V(P)$ is not known for Metropolis-Hasting kernels. The upper bounds of $\varrho_V(P)$ derived from drift and minorization inequalities seem to be poor and difficult to improve, excepted in stochastically monotone case (e.g.~see \cite[Sec.~6]{MenTwe96} and \cite{Bax05}). Consequently the inequality $\varrho_2(P) \leq \varrho_V(P)$ (see \cite[Th.~6.1]{Bax05}) is not relevant here. Observe that applying the quasi-compactness approach on $\cB_V$ would allow us to estimate the value of $\varrho_V(P)$, but in practice this method cannot be efficient since no accurate bound of the essential spectral radius of $P$ on $\cB_V$ is known.  
		\item The present paper shows that considering the action on $\L^2(\pi)$ rather than on $\cB_V$ of a Metropolis-Hasting kernel $P$ enables us to benefit from the richness of Hilbert spaces. The notion of Hilbert-Schmidt operators plays an important role for obtaining our bound (\ref{70}). The reversibility of $P$, that is $P$ is self-adjoint on $\L^2(\pi)$, implies that any upper bound $\rho$ of $\varrho_2(P)$ gives the inequality $\|P^nf - \Pi f\|_2 \leq \rho^{\, n}\, \|f\|_2$ for every $n\geq1$ and every $f\in\L^2(\pi)$. Consequently any such $\rho$ provides  an efficient information to estimate the numbers of iterations required to achieve the convergence in the Metropolis-Hasting algorithm. 
\end{enumerate}
From our bound (\ref{bound-r-ess-L2}) it can be expected that the quasi-compactness method (cf.~Introduction) will give a numerical procedure for estimating $\varrho_2(P)$ in the continuous state space case.

\bibliographystyle{alpha}

\end{document}